\title{Optimal frequency sweep method in multi-rate circuit simulation}
\newcommand*\samethanks[1][\value{footnote}]{\footnotemark[#1]}
\author{Kai Bittner\thanks{University of Applied Sciences Upper Austria, Softwarepark 11, 4232 Hagenberg, Austria }\and
Hans Georg Brachtendorf\samethanks[1]}
\newcommand{\RR}{\mathbb{R}}
\newcommand{\changed}[1]{{#1}}
\begin{document}

\maketitle

\begin{abstract}\par
  \textbf{Purpose -- RF circuits often possess a multi-rate behavior. Slow changing
          baseband signals and fast oscillating carrier signals often occur in the same circuit.
          Frequency modulated signals pose a particular challenge.}
  \par
  
  \textbf{Design/methodology/approach -- The ordinary circuit differential equations (ODEs) are first 
  rewritten by a system of
  (multi-rate) partial differential equations (MPDEs) in order to decouple the different time scales. 
   For an efficient simulation we need an optimal choice of a frequency dependent parameter.
   This is achieved by an additional smoothness condition. 
  }
  \par

  \textbf{Finding -- By incorporating the smoothness condition into the discretization, we obtain
          a nonlinear system of equations complemented by a minimization constraint. This problem
          is solved by a modified Newton method, which needs only little extra computational
          effort. The method is tested on a Phase Locked Loop with a frequency modulated input
          signal.}
  \par
  
  \textbf{Originality/value -- A new optimal frequency sweep method was introduced, which will permit
          a very efficient simulation of multi-rate circuits.}
\end{abstract}

\begin{keywords} 
 RF circuit simulation, multi-rate simulation, envelope simulation.
\end{keywords}

\textbf{Paper type:} Research paper

\section{Introduction}

Widely separated time-scales occur in many
radio-frequency (RF) circuits such as mixers, oscillators, PLLs, etc., 
making the analysis with standard
numerical methods difficult and costly. Low frequency or baseband signals and high
frequency carrier signals often occur in the same circuit,
enforcing very small time-steps over a long time-period
for the computation of the numerical solution. The occurrence of widely separated
time-scales is also referred to as a multi-scale or multi-rate problem,
where classical
numerical techniques require prohibitively long run-times. 

A method to circumvent
this bottleneck is to reformulate the ordinary circuit \changed{equations, which are differentieal algebraic equations (DAEs),} 
as a system of partial DAEs (multi-rate PDAE).
The method was first presented in \cite{Bra94,BWL+96}, specialized to compute steady states.The technique was adapted
to the transient simulation of
driven circuits with a-priori known frequencies in \cite{NL96,Roy97}. 
A generalization
for circuits with a-priori unknown or 
time-varying frequencies was developed in
\cite{Bra97,BL98a,Bra2001}. This approach opened the door
to multi-rate techniques with frequency modulated signal sources
or autonomous circuits such as oscillators with a priori unknown fundamental
frequency \cite{BL98a,BL98,Bra2001,Hou04,Pulch08a,Pulch08b}.

Here, we present a new approach for the computation of a not a-priori known, time-varying 
frequency, which
is driven by the requirement to have a smooth multi-rate solution, crucial for the efficiency 
of the computation. 

\section{The multi-rate circuit simulation problem\label{MULTI_RATE}}

We consider circuit equations in the charge/flux oriented
modified nodal analysis (MNA) formulation, which yields a mathematical model
in the form of a system of differential-algebraic equations
(DAEs):
\begin{equation}
  \label{eq_MNA_charge}
  \tfrac{d}{dt}q\big(x(t)\big)
        + g\big(x(t)\big) = s(t).
\end{equation}
Here $x(t)\in\RR^n$ is the vector of node potentials and specific branch
voltages and $q(x)\in\RR^n$ is the vector of charges and fluxes. The vector
$g(x)\in\RR^n$ comprises static contributions, while $s(t)\in\RR^n$ contains the
contributions of independent sources.

To separate different time scales the problem is
reformulated as a multi-rate PDAE (cf. \cite{Bra2001,Hou04,Pulch08b,Pulch08a}), i.e.,
\begin{equation}\label{multirate}
\tfrac{\partial}{\partial \tau} q\big(\hat{x}(\tau,t)\big)
+\omega(\tau)\,\tfrac{\partial}{\partial t} q\big(\hat{x}(\tau,t)\big)
+g\big(\hat{x}(\tau,t)\big)=\hat{s}\big(\tau,t\big).
\end{equation}
If the new source term is chosen, such that
\begin{equation}\label{char_source}
s_\theta(t) = \hat{s}\big(t,\Omega_\theta(t)\big),
\end{equation}
where $\Omega_\theta(t)=\theta+\int_0^t \omega(s)\,ds$,
then a solution $\hat{x}$ of (\ref{multirate}) determines a family
$\{x_\theta:~\theta\in\RR\}$ of solutions for
\begin{equation}
  \label{singlerate}
  \tfrac{d}{dt}q\big(x(t)\big)
        + g\big(x(t)\big) = s_\theta(t),
\end{equation}
by
\begin{equation}\label{character}
x_\theta(t) = \hat{x}\big(t,\Omega_\theta(t)\big).
\end{equation}

Although the formulation (\ref{multirate}) is valid for any circuit,
it offers a more efficient solution only for certain types of
problems. This is the case if $\hat{x}(\tau,t)$ is periodic in $t$
and smooth with respect to $\tau$. Then, a semi-discretization with
respect to $\tau$ can be done resulting in a relative small number of
 periodic boundary problems
in $t$ for only a few discretization points $\tau_\ell$.
In the sequel we
will consider (\ref{multirate}) with periodicity conditions in
$t$, i.e., $\hat{x}(\tau,t)=\hat{x}(\tau,t+P)$ and suitable
initial conditions $\hat{x}(0,t)=X_0(t)$.
Here $P$ is an arbitrary but fixed period length, which results in a scaling of 
$\hat{x}(\tau,t)$ and $\omega(\tau)$
as we will see in the next section.

Note that for a given initial value for the original circuit equations
(\ref{eq_MNA_charge}) the
multi-rate formulation (\ref{multirate}) is not unique. First it does not
correspond to a single problem, but to a family of problems as
described in (\ref{singlerate}). This usually permits some freedom
in the choice of the multi-rate source term $\hat{s}(\tau,t)$ and for the initial
conditions. Furthermore, $\omega(\tau)$ can, in principle,
be chosen arbitrarily, but influences on the other hand, which choice 
of $\hat{s}(\tau,t)$ satisfies (\ref{char_source}).

In the sequel, we want to study how the above freedom can be used to
facilitate an efficient numerical solution of (\ref{multirate}). The
smoothness of $\hat{x}(\tau,t)$ is essential for the efficiency of
classical solvers. That is, the freedom we have for the formulation
of (\ref{multirate}) should be used to make the solution as smooth
as possible.

\section{Meaning and suitable choice of $\omega(\tau)$\label{OMEGA}}

It can be easily verified that $\omega(\tau)$ can be any positive function, if the source term
$\hat{s}(\tau,t)$ is chosen to satisfy (\ref{char_source}). 
Therefore, we will
first investigate, what effect different selections of
$\omega(\tau)$ have. Let $\omega_1(\tau)$ and $\omega_2(\tau)$ be two 
choices. If $\hat{x}_1(\tau,t)$ and $\hat{x}_2(\tau,t)$ both satisfy
(\ref{character}), then it is obvious that
\begin{equation}\label{omega2}
\hat{x}_2(\tau,t)=\hat{x}_1(\tau,t+S(\tau)),
\end{equation}
where $S(\tau) =\int_0^\tau \omega_1(s)-\omega_2(s)\,ds$. That is, for a
fixed $\tau$ changing $\omega(t)$ results in a phase shift of
$\hat{x}(\tau,t)$. A corresponding phase shift for the source term
has to be performed in order to satisfy (\ref{char_source}). 
The following Lemma describes how $\omega(\tau)$ affects the smoothness of 
$\hat{x}(\tau,t)$.

\begin{lemma}\label{smoothmr}
Let $L>\frac{P}{\omega(\tau)}$ for any $\tau>0$. If
\begin{equation}\label{smoothxhat}
\big\|\hat{x}(\tau+\delta,t)-\hat{x}(\tau,t)\big\|\le\varepsilon,
\end{equation}
 for $\delta\in(0,L]$, then 
\begin{equation}\label{smoothx}
\big\|x_\theta\big(\tau+T(\tau)\big)-x_\theta(\tau)\big\|\le\varepsilon,
\end{equation}
where $T(\tau)$ is the unique solution of 
$
P = \int_\tau^{\tau+T(\tau)} \omega(s)\,ds.
$
\end{lemma}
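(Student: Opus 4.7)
The plan is to rewrite the left-hand side of (\ref{smoothx}) in terms of $\hat{x}$ via (\ref{character}), exploit the $P$-periodicity of $\hat{x}$ in its second argument to collapse the shift in the fast variable, and then apply the smoothness hypothesis (\ref{smoothxhat}) directly. The only non-trivial verification will be that $T(\tau)$ really lies in the admissible range $(0,L]$ for which (\ref{smoothxhat}) may be invoked.

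First I would expand
\[
x_\theta\bigl(\tau+T(\tau)\bigr)-x_\theta(\tau)
= \hat{x}\bigl(\tau+T(\tau),\Omega_\theta(\tau+T(\tau))\bigr)
-\hat{x}\bigl(\tau,\Omega_\theta(\tau)\bigr)
\]
using (\ref{character}). The defining identity for $T(\tau)$ gives
$\Omega_\theta(\tau+T(\tau))-\Omega_\theta(\tau)=\int_\tau^{\tau+T(\tau)}\omega(s)\,ds = P$,
so by the $P$-periodicity of $\hat{x}$ in $t$ we may replace $\Omega_\theta(\tau+T(\tau))$ by $\Omega_\theta(\tau)$ in the first term. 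The difference then collapses to
\[
\hat{x}\bigl(\tau+T(\tau),\Omega_\theta(\tau)\bigr)-\hat{x}\bigl(\tau,\Omega_\theta(\tau)\bigr),
\]
which is precisely the object bounded by (\ref{smoothxhat}) with $\delta=T(\tau)$, provided $T(\tau)\in(0,L]$.

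The main (and only) obstacle is verifying $T(\tau)\le L$, together with the existence and uniqueness of $T(\tau)$. The hypothesis $L>P/\omega(\tau)$ for every $\tau$ is equivalent to $\omega(s)>P/L$ pointwise, so $\omega$ is bounded below by a positive constant. This makes $T\mapsto\int_\tau^{\tau+T}\omega(s)\,ds$ a continuous, strictly increasing bijection of $[0,\infty)$ onto itself, which gives existence and uniqueness of $T(\tau)$. Moreover
\[
\int_\tau^{\tau+L}\omega(s)\,ds \;>\; \int_\tau^{\tau+L}\frac{P}{L}\,ds \;=\; P,
\]
so by monotonicity $T(\tau)<L$, completing the justification. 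Chaining these observations gives $\|x_\theta(\tau+T(\tau))-x_\theta(\tau)\|\le\varepsilon$ as claimed.
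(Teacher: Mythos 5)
Your proof is correct and follows essentially the same route as the paper's: rewrite via (\ref{character}), use the defining property of $T(\tau)$ together with the $P$-periodicity in $t$ to collapse the shift in the fast variable, and then invoke (\ref{smoothxhat}) with $\delta = T(\tau)$. The only variation is in verifying $T(\tau)<L$: the paper applies the mean value theorem for integrals to write $P=T(\tau)\,\omega(\xi)$, whereas you bound $\omega$ below by $P/L$ and use monotonicity of $T\mapsto\int_\tau^{\tau+T}\omega(s)\,ds$ --- a marginally more elementary argument that avoids assuming continuity of $\omega$ and also settles existence and uniqueness of $T(\tau)$, which the paper leaves implicit.
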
 

\begin{proof}
Following (\ref{character}) we have 
\begin{eqnarray*}
x_\theta(\tau+T(\tau))&=& \hat{\changed{x}}\bigg(\tau+T(\tau), \theta+\int_0^{\tau+T(\tau)}\omega(s)\,ds\bigg)\\
&=& \hat{x}\bigg(\tau+T(\tau), \theta+\int_0^\tau\omega(s)\,ds+P\bigg)\\
&=& \hat{x}\bigg(\tau+T(\tau), \theta+\int_0^\tau\omega(s)\,ds\bigg).
\end{eqnarray*}
\changed{B}y the mean value theorem for integration we obtain
$$
P = T(\tau)\, \omega(\xi),\qquad \xi\in\big(\tau,\tau+T(\tau)\big).
$$
Thus, $T(\tau)<L$ and (\ref{smoothx}) follows from (\ref{smoothxhat}) by (\ref{character}).
\end{proof}

Lemma~\ref{smoothmr} states that we can only expect smoothness of $\hat{x}(\tau,t)$
in $\tau$ if $x_\theta$ is nearly $T(\tau)$-periodic in a neighborhood
of $\tau$. 

Since typical multi-rate signals behave locally like
a periodic signal, i.e.,
$x_\theta(t) \approx x_\theta(t+T(t^*)\changed{)}$ as long as $t$ is close to some $t^*$. 
That is, there should be a choice of $\omega(\tau)$ such that 
$\hat{x}(\tau_1,t)$ and $\hat{x}(\tau_2,t)$ do not differ much for sufficiently small
$\tau_2-\tau_1$.
This leads to the additional condition
\begin{equation}\label{min_cond}
\int_0^P \Big|\tfrac{\partial}{\partial\tau}\hat{x}(\tau,t)\Big|^2\,dt\
\to \min.
\end{equation}
in order to determine $\omega(\tau)$.
\changed{A similar condition is used in \cite{Pulch08a}). The difference to our approach is in the discretization
of the problem. While Pulch derives a phase condition from the minimization condition, we will discretize
the minimization condition directly, which will lead to a smaller linear system to be solved in the algorithm.
Furthermore, we are using a Rothe method for the discretization of the PDAE, which allows a completely adaptive solution.

A different approach is suggested in  \cite{Hou04}, where $\hat{x}(\tau,t)$
is replaced by $q\big(\hat{x}(\tau,t)\big)$ in (\ref{min_cond}). Although this seems the proper condition for an 
efficient discretization of  the derivative in $\tau$ in the multi-rate PDAE (\ref{multirate}), another problem occurs here.
Using an implicit multistep method for the discretization in $\tau$, we need a predictor
(as initial guess for Newton's method), which is usually based on the solution
of the previous time step. Here, a condition on  $\hat{x}(\tau,t)$ as in (\ref{min_cond}) occurs naturally. A condition
on $q\big(\hat{x}(\tau,t)\big)$ covers only contributions of the signal which are already smoothed 
(by capacitances or inductances), and might not provide a sufficient initial guess.} 

It turns out that condition (\ref{min_cond}) yields the expected result for amplitude and 
frequency modulation of a periodic signal\changed{, i.e., for typical multirate RF signals.}

\begin{lemma}\label{mr_modulated}
Assume 
$x_\theta(t)=a(t)\,y\big(\theta+\tilde{\Omega}(t)\big)$ with $y(t)=y(t+P)$ is a solution of 
\emph{(\ref{singlerate})}
with non-trivial $a(t)$ and $y'(t)$.
The solution of the corresponding multi-rate problem \emph{(\ref{multirate})} 
satisfies \emph{(\ref{min_cond})} if and only if
$\omega(\tau)=\tilde{\Omega}'(\tau)$
and $\hat{x}(\tau,t) = a(\tau)\,y(t)$.
\end{lemma}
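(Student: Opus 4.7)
The plan is to exploit the parametrization of admissible multi-rate solutions given by (\ref{omega2}) and to reduce the minimization condition (\ref{min_cond}) to a pointwise-in-$\tau$ quadratic problem in the single scalar $S'(\tau)$.

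First, I would exhibit a natural candidate pair. After absorbing $\tilde{\Omega}(0)$ into $\theta$ (a harmless relabelling of the family), I would verify that $\omega_0(\tau) := \tilde{\Omega}'(\tau)$ together with $\hat{x}_0(\tau,t) := a(\tau)\,y(t)$ is consistent with $\{x_\theta\}$ through (\ref{character}), since $\hat{x}_0(t,\Omega_\theta(t)) = a(t)\,y(\theta + \tilde{\Omega}(t)) = x_\theta(t)$, and $P$-periodicity in $t$ is inherited from $y$.

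Next, for any other admissible $\omega$, equation (\ref{omega2}) gives $\hat{x}(\tau,t) = a(\tau)\,y(t + S(\tau))$ where $S(\tau) := \int_0^\tau \big(\omega_0(s) - \omega(s)\big)\,ds$. I would then differentiate,
$$\partial_\tau \hat{x}(\tau,t) = a'(\tau)\,y(t + S(\tau)) + a(\tau)\,S'(\tau)\,y'(t + S(\tau)),$$
square it, and integrate over $t \in [0,P]$. The cross term collapses by $P$-periodicity, because $\int_0^P y(u)\cdot y'(u)\,du = \tfrac12\big[|y(P)|^2 - |y(0)|^2\big] = 0$, so the functional decouples into
$$\int_0^P \big|\partial_\tau \hat{x}(\tau,t)\big|^2\,dt = a'(\tau)^2\,\|y\|_{L^2(0,P)}^2 + a(\tau)^2\,S'(\tau)^2\,\|y'\|_{L^2(0,P)}^2.$$
The only $\omega$-dependence sits in the second summand, and non-triviality of $y'$ gives $\|y'\|_{L^2(0,P)} > 0$; hence the pointwise minimum over $\omega$ is attained precisely at $S'(\tau) \equiv 0$, i.e.\ $\omega = \tilde{\Omega}'$ and $\hat{x}(\tau,t) = a(\tau)\,y(t)$, which yields both directions of the iff.

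The subtlety I expect to be the main obstacle is the treatment of points where $a(\tau) = 0$: at such $\tau$ the coefficient of $S'(\tau)^2$ vanishes and the minimization alone does not pin down $\omega(\tau)$. Non-triviality of $a$ confines these to a nowhere-dense set, and under the standing continuity assumption on $\omega$ the optimal choice $S' \equiv 0$ extends uniquely across them, so uniqueness of the minimizer is preserved.
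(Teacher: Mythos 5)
Your proposal is correct and follows essentially the same route as the paper: write $\hat{x}(\tau,t)=a(\tau)\,y(t+S(\tau))$ via (\ref{omega2}), differentiate in $\tau$, kill the cross term by $P$-periodicity of $y^2$, and observe that the resulting quadratic in $S'(\tau)$ is minimized exactly at $S'\equiv 0$. Your closing remark on the degenerate points where $a(\tau)=0$ is a small refinement the paper's proof silently skips over, but it does not change the argument.
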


\begin{proof}
According to (\ref{character}) and (\ref{omega2}) we can write the multi-rate solution as 
$\hat{x}(\tau,t) = a(\tau)\,y\big(t+S(\tau)\big)$, where $S(\tau)$ vanishes if and only if 
$\omega(\tau)=\tilde{\Omega}'(\tau)$. Taking the derivative with respect to $\tau$ we obtain
$$
\tfrac{\partial}{\partial\tau}\hat{x}(\tau,t)
=a'(\tau)\,y\big(t+S(\tau)\big)+a(\tau)\,y'\big(t+S(\tau)\big)\,S'(\tau).
$$ 
Note, that due to periodicity of $y(t)$ we have
$$
\int_0^P y\big(t+S(\tau)\big)\,y'\big(t+S(\tau)\big)\,dt 
= \tfrac{1}{2}\int_0^P \tfrac{d}{dt}y^2(t)\,dt
= \tfrac{1}{2}\left(y^2(P)-y^2(0)\right) = 0.
$$
Thus, the expression in (\ref{min_cond}), which has to be minimized becomes 
$$
 \big(a'(\tau)\big)^2\,\int_0^P y^2(t)\,dt
+\big(a(\tau)\,S'(\tau)\big)^2 \,\int_0^P \big(y'(t)\big)^2\,dt.
$$
Obviously the expression becomes minimal if $S'(\tau) = 0$, $\tau\in(\tau_1,\tau_2]$.
From (\ref{omega2}) we see that $S(0)=0$ and the statement is proven.
\end{proof}

Applying the approach to more general problems, permits to consider $\frac{\omega(t)}{P}$
as a generalization of the instantaneous frequency of the multi-rate problem, if (\ref{min_cond})
is satisfied. 

For the numerical solution of the multi-rate problem we have to discretize the MPDAE's (\ref{multirate})
together with the smoothness condition (\ref{min_cond}). The following section presents a new approach for the numerical solution of the multi-rate problem with unknown frequency parameter.

\section{Discretization\label{ROTHE}}

We discretize (\ref{multirate}) with respect to $\tau$ by a Rothe method using a linear multi step
method and obtain
\begin{equation}\label{rothe}
\sum_{i=0}^s \Bigg\{\alpha^k_i\, q\big(X_{k-i}(t)\big) + \beta^k_i
  \bigg(\omega_{k-i}\,\frac{d}{dt}q\big(X_{k-i}(t)\big)   
  + g\big(X_{k-i}(t)\big)-\hat{s}\big(\tau_{k-i},t\big)\bigg)\Bigg\}=0\quad
\end{equation}
complemented by the periodicity condition $X_k(t) = X_k(t+P)$.
Here we have to determine an approximation $X_k(t)$ of
$\hat{x}(\tau_k,t)$ from known, approximative solutions
$X_{k-i}(t)$ at previous time steps $\tau_{k-i}$. $i=1,\ldots,s$.
Examples are the trapezoidal rule with $s=1$, $\alpha^k_i = 1$ and
$\beta^k_i=\frac{2}{\tau_k-\tau_{k-1}}$ or GEAR-BDF of order $s$
with $\beta^k_i=\delta_{i,0}$ and suitable $\alpha^k_i$. That is,
$X_k$ is the solution of a periodic boundary value problem for the
DAEs given in (\ref{rothe}). 

Assuming that the $\tau_{k-i}$, $\omega_{k-i}$ and $X_{k-i}(t)$ are known and fixed
for $i>0$ we define
\begin{align*}
f_k&(x,t) :=\alpha^k_0 q(x) + \beta^k_0\Big(g(x)-\hat{s}\big(\tau_k,t\big)\Big)\\[-.5ex]
&+~\sum_{i=1}^s\Bigg\{ \alpha^k_i q\big(X_{k-i}(t)\big)
+ \beta^k_i \left(\omega_{k-i}\,\frac{d}{dt}q\big(X_{k-i}(t)\big)
                          +g\big(X_{k-i}(t)\big)-\hat{s}\big(\tau_{k-i},t\big)\right)\Bigg\},
\end{align*}
as well as $q_k(x) := \beta^k_0\,q(x)$.
Then (\ref{rothe}) becomes the periodic boundary value problem
\begin{equation}\label{envelope}
\omega\tfrac{d}{dt}q_k\big(x(t)) + f_k(x(t),t) = 0,\qquad x(t) = x(t+P).
\end{equation}

Furthermore, $\omega_k$ is an approximation 
for $\omega(\tau_k)$, which has to be determined during the computation (if not known in advance).
Since the midpoint rule states for a suitable choice of $\tau\in(\tau_1,\tau_2)$ that
$$
\tfrac{\partial}{\partial\tau}\hat{x}(\tau,t)=\hat{x}(\tau_2,t)-\hat{x}(\tau_1,t)
$$
we replace condition (\ref{min_cond}) by
\begin{equation}
\label{min_cond2}
\int_0^P\big|X_k(t)-X_{k-1}(t)\big|^2\,dt\to\min,
\end{equation}
i.e., we aim to minimize the
change of $X_k(t)$ from one time step to the next, which corresponds well to the original goal.

If we have a smooth envelope, we may be able to choose the step sizes $\tau_k-\tau_{k-1}$ much larger
than the period $T(\tau)=\frac{P}{\omega(\tau)}$. That means the number of time steps will become much smaller compared to the number
of oscillations of the carrier signal. This can lead to an essential gain in computation time compared
to transient analysis, since the computation cost for one period is comparable to one envelope time step. 

The periodic problem (\ref{envelope})
can be solved by a collocation or Galerkin method, where $X_k(t)$ is expanded
in a periodic basis $\{\phi_k\}$ 
(as a Fourier, B-spline, or wavelet basis) and tested at collocation points or integrated
against test functions (see e.g. \cite{BiDau10a,BiDau10b}). 
This leads to a nonlinear system 
\begin{equation}\label{nonlinear}
F(\bm{c},\omega) = 0
\end{equation}
of equations for the coefficients
$\bm{c}=\bm{c}^{(k)}=(c^{(k)}_\ell)_\ell$ of the basis expansion 
$X_k(t)=\sum_\ell c^{(k)}_\ell\,\phi_\ell(t)$ and the unknown frequency parameter $\omega$. 
Here, condition
(\ref{min_cond2}) is replaced by a condition on the expansion coefficients, namely
\begin{equation}\label{min_cond3}
\sum_\ell\|c^{(k)}_\ell-c^{(k-1)}_\ell\big\|^2_2\to \min.
\end{equation} 
This means, that we are still minimizing the distance of $X_k(t)$ and $X_{k-1}(t)$, although with
respect to a slightly modified measure.

The nonlinear system (\ref{nonlinear}) is solved by a modified Newton's method. 
The linearization of the problem yields the under-determined system
\begin{equation}\label{newton_ud}
\bm{A\,d}_c+d_\omega\,\bm{z} =\bm{b},
\end{equation}
with the right hand site 
$\bm{b}=F(\bm{c},\omega)$, the Jacobian $\bm{A}= \changed{\frac{\partial}{\partial \bm{c}}} F(\bm{c},\omega)$, and 
$\bm{z} = \changed{\frac{\partial}{\partial \omega}} F(\bm{c},\omega)$. Furthermore, $\bm{d}_c$ and $d_\omega$ are the updates
in the Newton iteration
$$
\bm{c}^{(k,j)}=\bm{c}^{(k,j-1)}-\bm{d}_c,\qquad \omega_{k,j}=\omega_{k,j-1}-d_\omega,
$$
\changed{with an initial guess chosen as $\bm{c}^{(k,0)}=\bm{c}^{(k-1)}$ and $\omega_{k,0}=\omega_{k-1}$.}

To find a solution satisfying (\ref{min_cond3}) we rewrite (\ref{newton_ud}) as
$$
\bm{d}_c=\tilde{\bm{b}} - d_\omega\,\tilde{\bm{z}},
$$
where $\tilde{\bm{b}}=\bm{A}^{-1}\,\bm{b}$ and $\tilde{\bm{z}}=\bm{A}^{-1}\,\bm{z}$ are computed 
by solving the corresponding linear systems. This needs a LU-decomposition of $\bm{A}$ and two 
forward-backward substitution, i.e., the computational costs are only slightly higher as for 
$\omega(\tau)$ fixed in advance. Possible higher step sizes and faster convergence of Newton's method
justify this extra cost if a good choice for $\omega(\tau)$ is not available in advance.
In the $j$-th Newton step $\bm{c}^{(k,j)} = \bm{c}^{(k,j-1)}-\bm{d}_c$ we determine the Newton 
correction, which satisfies (\ref{min_cond3})
as follows. We want
$$
\|\bm{c}^{(k,j)}-\bm{c}^{(k-1)}\big\|^2_2
=\|\underbrace{\bm{c}^{(k,j-1)}-\bm{c}^{(k-1)}-\tilde{\bm{b}}}_{\bm{a}} 
   + d_\omega\,\tilde{\bm{z}}\|^2_2
$$
to be minimal.
Obviously the minimum is attained for
$
d_\omega = -\frac{\tilde{\bm{z}}^T\bm{a}}{\tilde{\bm{z}}^T\tilde{\bm{z}}}.
$

\section{Numerical test --- A phase locked loop\label{NUM_TEST}}

\changed{The method has been tested on several circuits. 
We show results from the
multi-rate simulation of a Phase Locked Loop (PLL) with a frequency modulated input signal.
The phase of the output signal is synchronized  with the phase of the input ``reference'' signal.}
This is achieved, by comparing the phases of the ``reference'' signal and the
output or ``feedback'' signal in a phase detector. The result is then filtered,
in order to stabilize the behavior of the PLL, and fed to a Voltage Controlled Oscillator (VCO)
whose frequency depends on the (filtered) phase difference. The output is fed back
(possibly through a frequency divider) to the phase detector. If the PLL is locked, the phases
of reference and feedback signal are synchronized. Thus both signal will have (almost) 
the same local frequency.

\begin{figure}[h]
  \centering
  \includegraphics[width=0.49\textwidth]{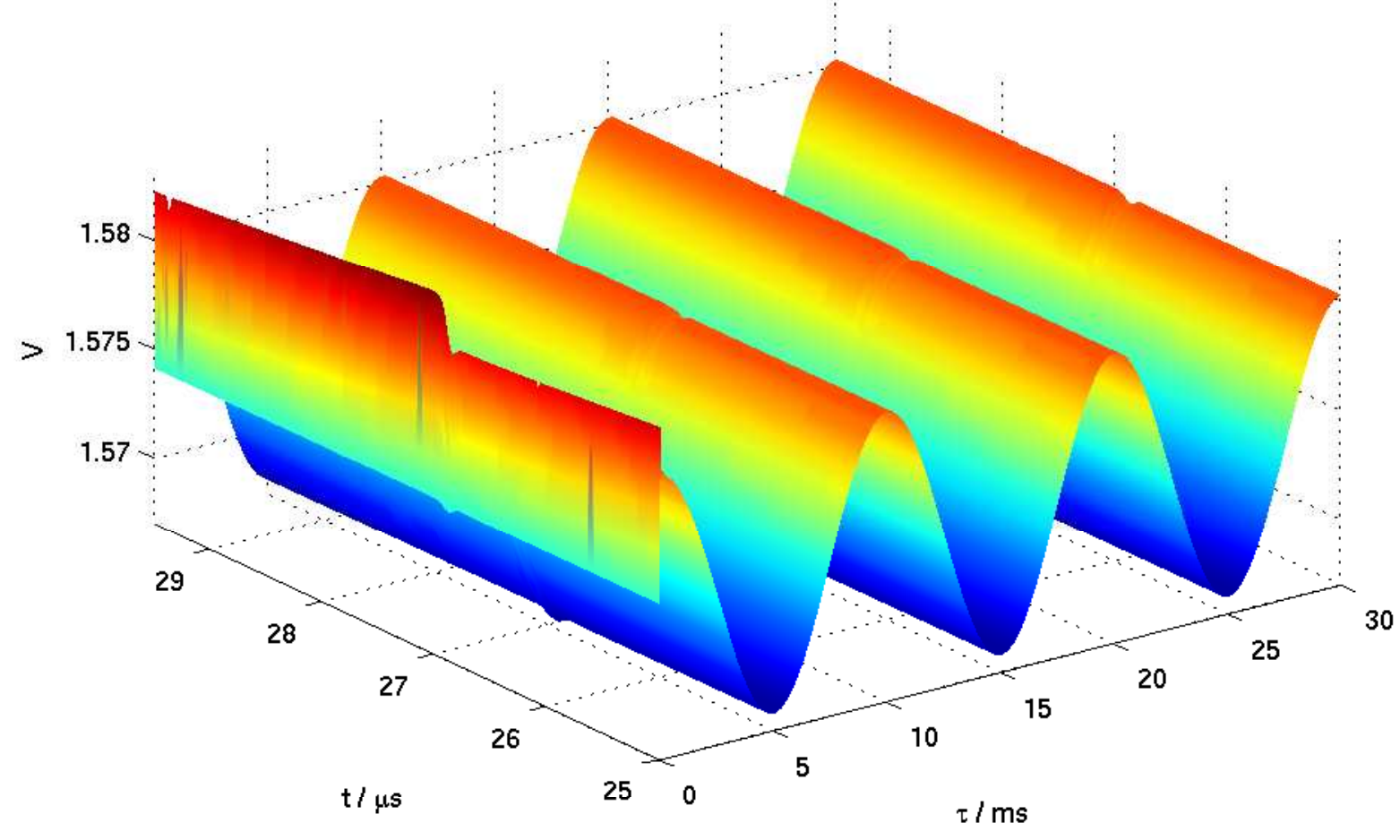}\hfill\includegraphics[width=0.49\textwidth]{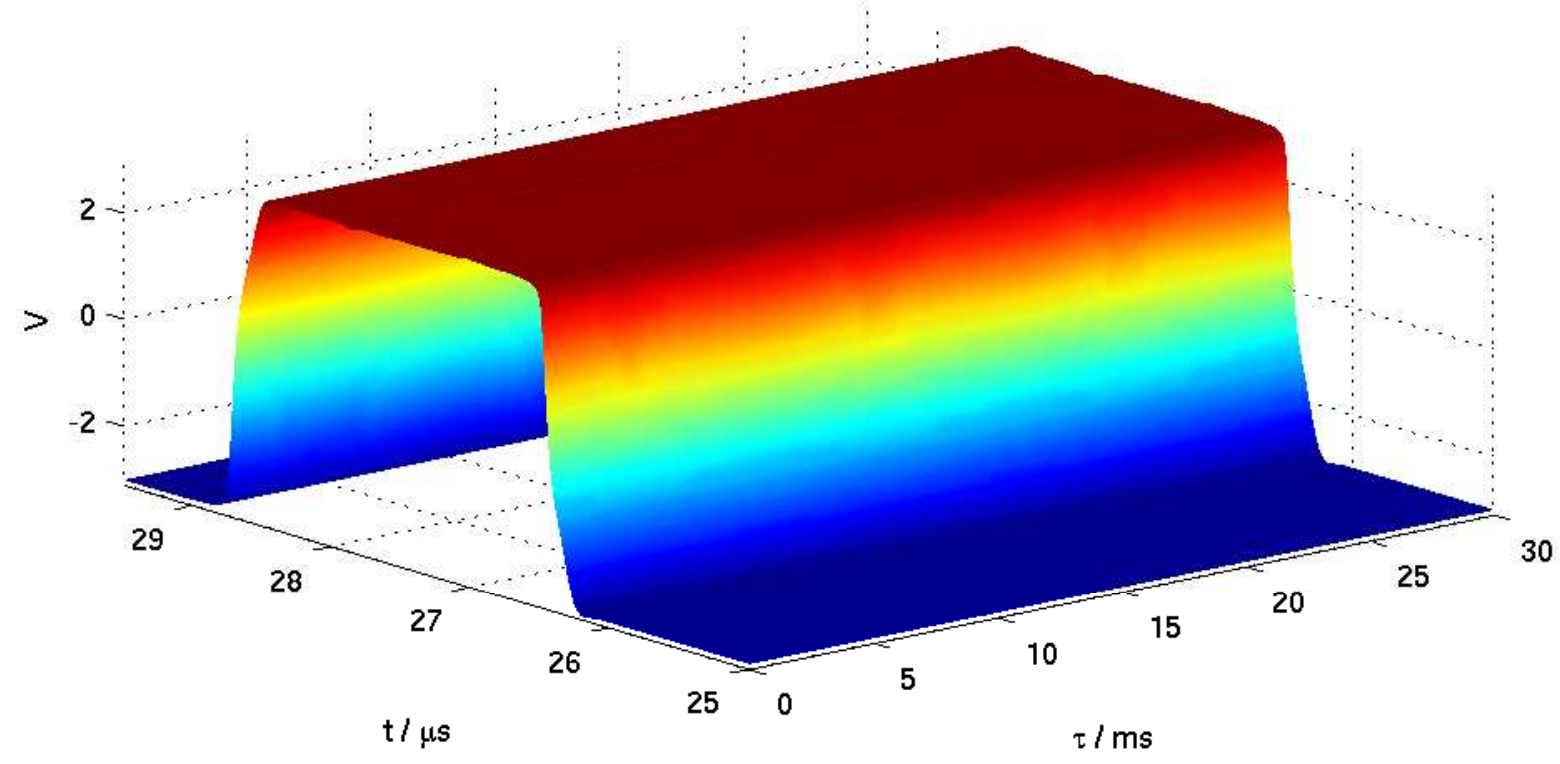}
  \caption{Control and feedback signal of the PLL}
  \label{pllcontrol}
\end{figure}

We have tested a PLL (containing 145 MOSFETs and 80 unknowns)
with a frequency modulated sinusoidal signal with frequency 222kHz.
The baseband signal is also sinusoidal with frequency 200Hz and frequency deviation 200Hz.
\changed{We have chosen this example, since it was particular challenging, due to its size, but also
due its digital-like behavior, which requires an efficient adaptive representation of the solution, as well as
a very accurate estimate of the frequency parameter $\omega(\tau)$. Although the baseband signal is 
known to us after locking of the PLL, this information is not provided to the algorithm.
This permits a verification of the  $\omega(\tau)$ determined by our algorithm. Before the locking
of the PLL the optimal $\omega(\tau)$ is not known to us, and the estimate of the algorithm
proved essential for this part of the simulation to be successful.}

\begin{figure}[h]
  \centering
  \includegraphics[width=0.6\textwidth]{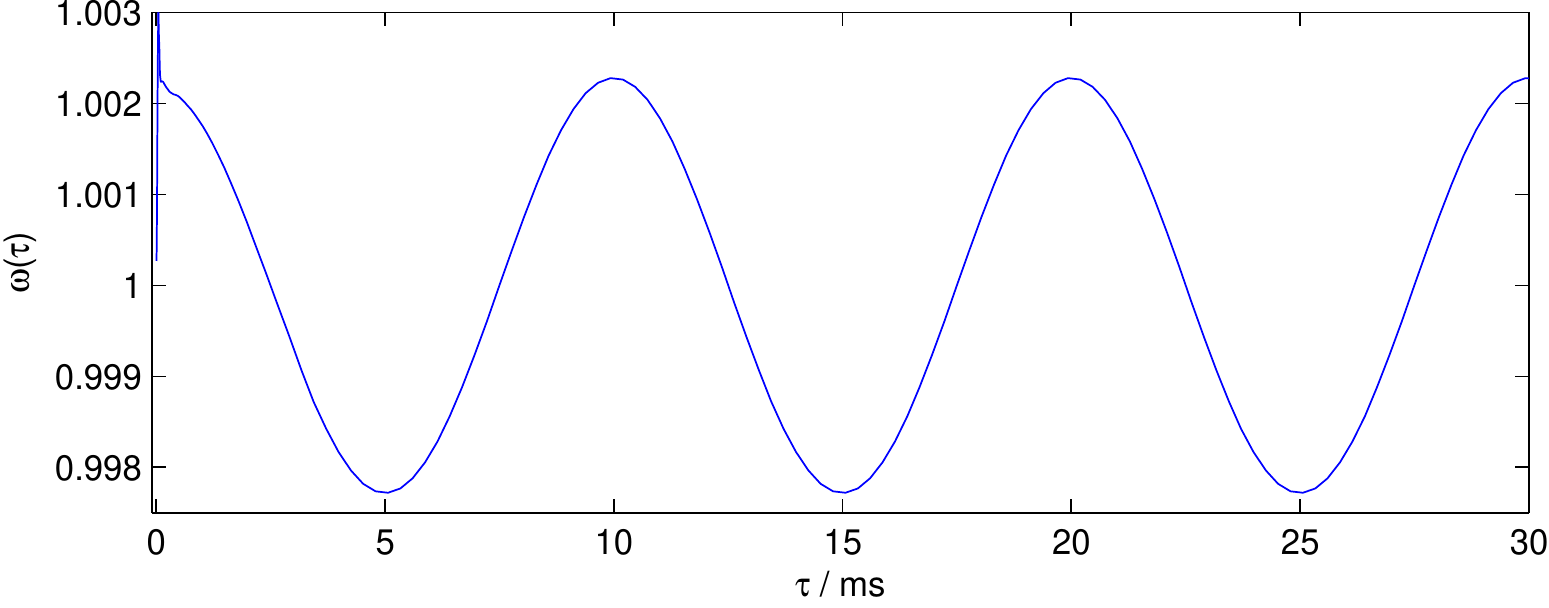}
  \caption{Frequency parameter $\omega(\tau)$}
  \label{pllomega}
\end{figure}

Due to the chosen circuit design most of the signals show a digital behavior such that the solution
can not efficiently be expanded by a Fourier basis. Thus, using the Harmonic Balance method is not suited
for the solution of the problem, in particular since the use of a frequency divider results in a wide 
frequency range of the solution.
Therefore, the periodic problem is solved by an adaptive 
spline wavelet method described in \cite{BiDau10a,BiDau10b}. The quality of the estimate of
$\omega(\tau)$ is essential for the efficiency of the method, because this results in a good initial guess
for the solution and the spline grid in Newton's method.  

In Figure~\ref{pllcontrol} (left) one can see the control signal for the VCO (the filtered output
of the phase detector), which controls the frequency of the oscillator. Note, that after the locking phase at the beginning the envelope
corresponds to the baseband signal, while the carrier signal is due to the filtering almost
constant.
Figure~\ref{pllcontrol} (right) shows the digital feedback signal. Note that $\omega(\tau)$ was chosen 
to satisfy the condition (\ref{min_cond3}). The plot of $\omega(\tau)$ in Figure~\ref{pllomega}
fits almost perfectly the baseband signal, \changed{after the PLL is locked}. 
In the light of Lemma~\ref{mr_modulated}, the graphs in
Figures~\ref{pllcontrol} and \ref{pllomega} show 
indeed a frequency modulated box shaped signal.
The optimal choice of $\omega(\tau)$ results in high smoothness with respect to $\tau$.
This allows to do the shown envelope simulation using only 74 envelope time steps, while a 
corresponding transient analysis would contain 3100 oscillations. 
The wavelet envelope simulation of this circuit was
done in 7\,min. However, a comparable transient simulation on the same simulator needed 33\,hours. 

\section{Conclusion}

We have presented a new method for an accurate estimate of an unknown frequency parameter
in multi-rate envelope simulations, which permits an efficient simulation even for challenging problems.



\end{document}